\documentclass[12pt]{amsart}
\usepackage[a4paper,margin=1in]{geometry}
\usepackage{amsmath,amssymb,amsthm}
\usepackage{hyperref}
\usepackage{enumitem}

\title{Edge partitions into induced-$2K_2$-free bipartite graphs}
\author{András London}
\date{\today}
\subjclass[2020]{05C70, 05C75, 06A07, 15B34}
\keywords{Ferrers graphs, chain graphs, edge partitions, induced matchings, bipartite graphs, $0$--$1$ matrices}

\address{Institute of Informatics, University of Szeged, Szeged, Hungary}
\email{london@inf.u-szeged.hu \\ ORCID: 0000-0003-1957-5368}

\newtheorem{theorem}{Theorem}
\newtheorem{proposition}{Proposition}
\newtheorem{lemma}[theorem]{Lemma}
\newtheorem{corollary}[theorem]{Corollary}
\newtheorem{definition}[theorem]{Definition}
\newtheorem{remark}[theorem]{Remark}
\newtheorem{problem}[theorem]{Problem}

\newcommand{\fp}{\operatorname{fp}}
\newcommand{\ind}{\operatorname{ind}}
\newcommand{\width}{\operatorname{width}}
\newcommand{\NP}{\operatorname{NP}}

\begin{document}
\maketitle

\begin{abstract}
We study edge partitions of a bipartite graph into induced-$2K_2$-free bipartite graphs, i.e.\ into Ferrers (chain) graphs.
We define $\fp(G)$ as the minimum number of parts in such a partition.
We prove general lower and upper bounds in terms of induced matchings and Dilworth widths of neighborhood posets.
We compute the parameter exactly for paths and even cycles, and we exhibit separations showing that the induced-matching lower bound and the width upper bound can both be far from tight.
We also record a simple host-induced conflict-graph lower bound, present a $0$--$1$ matrix viewpoint, and add some complexity remarks.
\end{abstract}

\section{Introduction}

Edge partitions into graphs from a hereditary class provide an edge-analogue of graph colorings.
In a recent template-partition framework, Gy\H{o}rffy, London, Nagy and Pluh\'ar \cite{GyorffyLondonNagyPluhar2025} define, for a fixed graph $H$,
a parameter $\chi'_H(G)$ as the minimum number of bipartite subgraphs whose edge sets partition $E(G)$, each avoiding $H$ as an induced subgraph.
In this paper we focus on the first nontrivial bipartite case when $H=2K_2$.
The induced-$2K_2$-free bipartite graphs are exactly the Ferrers graphs, also called chain graphs:
equivalently, the neighborhoods on one side are totally ordered by inclusion.
They admit a convenient $0$--$1$ matrix description: after permuting rows and columns of the corresponding bipartite adjacency matrix, the $1$-entries form a Ferrers (Young-diagram) shape.
Background and further characterizations can be found in \cite{BrandstadtLeSpinrad}; see also \cite{EhrenborgVW}.

Given a bipartite graph $G=(U,V,E)$, we write $\fp(G)$ for the minimum $k$ such that
$E$ can be partitioned into $k$ edge-disjoint Ferrers subgraphs on the same bipartition.
For bipartite host graphs this is exactly the specialization
\[
\fp(G)=\chi'_{2K_2}(G)
\]
from \cite{GyorffyLondonNagyPluhar2025}.
Our parameter is distinct from Ferrers dimension and from chain-subgraph covers, where overlaps are allowed.

\begin{center}
\renewcommand{\arraystretch}{1.15}
\begin{tabular}{|l|c|c|l|}
\hline
\textbf{model} & \textbf{operation} & \textbf{overlaps} & \textbf{typical constraint} \\
\hline
Ferrers dimension~\cite{Cogis,ChatterjeeGhosh} & intersection & yes & intersection yields $G$ \\
chain-subgraph cover~\cite{MaSpinrad,BrandstadtES} & union cover & yes & each piece is Ferrers \\
Ferrers partition number $\fp$~\cite{GyorffyLondonNagyPluhar2025} & edge partition & no & each piece is Ferrers \\
\hline
\end{tabular}
\end{center}

\medskip
\noindent\textbf{Main results.}
\begin{itemize}[leftmargin=2em]
\item We prove general bounds
\[
\nu_{\ind}(G)\le \fp(G)\le \min\{\width(\mathcal P_U),\,\width(\mathcal P_V)\},
\]
in terms of induced matching number $\nu_{\ind}(G)$ and Dilworth widths of neighborhood-inclusion posets (Lemma~\ref{lem:lower} and Theorem~\ref{thm:upper}).

\item We compute $\fp$ exactly for paths and even cycles (Theorems~\ref{thm:path} and \ref{thm:cycle}).

\item We provide separation examples: disjoint unions of $C_8$ give an arbitrarily large gap $\fp(G)-\nu_{\ind}(G)$ (Corollary~\ref{cor:gap}),
while crown graphs show that the width upper bound can be off by a factor $\Theta(n)$ (Theorem~\ref{thm:crown}).

\item In the matrix viewpoint, we determine $\fp$ exactly for $K_{m,n}$ with a matching removed:
$\fp(K_{m,n}\setminus M)\in\{1,2\}$ with a sharp threshold at $|M|=2$ (Theorem~\ref{thm:Kmn-minus-matching}).

\item We also record a simple host-induced $2K_2$ conflict-graph lower bound and give a worked example illustrating the definitions (Section~\ref{sec:conflict}).
\end{itemize}

\medskip
\noindent The paper is organized as follows.
Section~\ref{sec:related} discusses connections to earlier work.
Sections~\ref{sec:bounds}--\ref{sec:separations} develop bounds and examples, and Section~\ref{sec:matrix} presents the matrix viewpoint.
We close with complexity remarks and open problems.

\section{Related Work}
\label{sec:related}

Ferrers graphs appear throughout the literature under the names \emph{chain graphs} and \emph{difference graphs}; see the survey monograph \cite{BrandstadtLeSpinrad}.
They admit the familiar forbidden induced subgraph characterization, namely the $2K_2$, and a parallel characterization by Ferrers-shaped $0$--$1$ matrices.

Two nearby lines of work should be distinguished from the present paper.
First, \emph{Ferrers dimension} and related intersection parameters ask for the minimum number of Ferrers graphs whose \emph{intersection} yields a given (di)graph; see Cogis \cite{Cogis} and, for connections to boxicity, \cite{ChatterjeeGhosh}.
Second, \emph{chain subgraph covers} ask for a minimum collection of chain subgraphs whose \emph{union} covers all edges, allowing overlaps; see \cite{MaSpinrad} and later developments for restricted bipartite classes \cite{BrandstadtES}.
Our parameter is a partition analogue: overlaps are forbidden, so the combinatorics changes.

Edge partitions into constrained bipartite subgraphs have also been studied recently in a broader ``template class'' framework; see \cite{GyorffyLondonNagyPluhar2025} for a representative instance motivated by forbidden-subgraph constraints. A related motivation for restricting the structure between pieces also appears in the ``generalized coloring'' model of London, Martin and Pluh\'ar  \cite{LondonMartinPluhar2022}.

Finally, we should mention Gera et al. works \cite{GeraLondonPluhar2022, GeraLondon2023} on exploring a more application-motivated and algorithmic approach in connection with the concept of \emph{nestedness}, a phenomenon that often emerges in ecological and real transaction networks.

\section{Definitions}

All graphs are finite and simple. Let $G=(U,V,E)$ be bipartite with color classes $U$ and $V$. All subgraphs considered below are spanning on the same bipartition $(U,V)$; only the edge set is partitioned (isolated vertices are allowed in parts).
We write $2K_2$ for the graph on four vertices consisting of two disjoint edges.

\begin{definition}
A bipartite graph is a \emph{Ferrers graph} (chain graph) if it contains no induced copy of $2K_2$.
Equivalently, the family (of neighborhoods) $\{N(u):u\in U\}$ (or symmetrically $\{N(v):v\in V\}$) is totally ordered by inclusion.
\end{definition}

\begin{proposition}\label{prop:one-component}
A Ferrers graph has at most one connected component containing an edge.
Equivalently, if a bipartite graph contains two vertex-disjoint edges in distinct connected components,
then it contains an induced $2K_2$.
\end{proposition}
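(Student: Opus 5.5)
The plan is to prove the second formulation directly and derive the first from it by contraposition. Suppose a bipartite graph $G=(U,V,E)$ has two edges $e_1=u_1v_1$ and $e_2=u_2v_2$ lying in distinct connected components, with $u_i\in U$ and $v_i\in V$. Because the components differ, the four endpoints are pairwise distinct, so $e_1$ and $e_2$ are vertex-disjoint. I will show that the subgraph induced on $\{u_1,v_1,u_2,v_2\}$ is exactly $2K_2$.

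An induced subgraph on these four vertices could contain only the edges $e_1,e_2$ together with possible cross edges. Edges inside a color class, such as $u_1u_2$ or $v_1v_2$, cannot occur since $G$ is bipartite. The only remaining candidates are $u_1v_2$ and $u_2v_1$. Either of them would join a vertex of the component of $e_1$ to a vertex of the component of $e_2$, which would merge the two components, a contradiction. Hence the induced subgraph has edge set exactly $\{e_1,e_2\}$, which is an induced $2K_2$.

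For the first formulation: a Ferrers graph has no induced $2K_2$ by definition. If it had two components each containing an edge, choosing one edge from each would give an induced $2K_2$ by the argument above. So at most one component contains an edge. As an alternative check via the neighborhood characterization, $N(u_1)$ and $N(u_2)$ would lie in disjoint components and both be nonempty, so they would be incomparable under inclusion, contradicting total ordering.

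There is no real obstacle here. The only point needing care is noting that distinct components force the four endpoints to be distinct, so that the two edges are genuinely disjoint.
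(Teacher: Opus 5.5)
Your proof is correct and follows essentially the same argument as the paper: edges in distinct components have four distinct endpoints, and there are no edges between $\{u_1,v_1\}$ and $\{u_2,v_2\}$ because any such edge would merge the components, so they induce a $2K_2$; the first formulation then follows immediately. Your appeal to bipartiteness to exclude $u_1u_2$ and $v_1v_2$ is harmless but unnecessary, since the component argument already rules out those edges as well.
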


\begin{proof}
Let $e=uv$ and $f=u'v'$ be edges in different connected components.
There are no edges between $\{u,v\}$ and $\{u',v'\}$, hence the subgraph induced by
$\{u,v,u',v'\}$ consists of exactly the two edges $e$ and $f$, i.e.\ an induced $2K_2$.
Therefore a Ferrers graph (which is induced-$2K_2$-free) cannot contain edges in two distinct components.
\end{proof}

\begin{definition}
For a bipartite graph $G$, the \emph{Ferrers partition number} $\fp(G)$ is the minimum $k$ such that
$E(G)$ can be partitioned into a disjoint union of $k$ edge sets, each inducing a Ferrers graph on the same bipartition, i.e.
\[
E = E_1 \dot\cup \cdots \dot\cup E_k,
\]
where each $G_i=(U,V,E_i)$ is a Ferrers graph.
\end{definition}

This parameter reacts sharply to induced $2K_2$ obstructions, and its behavior differs from cover/intersection models.
Even for very classical families (paths, cycles, crown graphs) one sees both tightness and large gaps.

\begin{remark}
Gy\H{o}rffy et al.~\cite{GyorffyLondonNagyPluhar2025} define $\chi'_H(G)$ as the minimum size of an edge partition of $G$ into bipartite graphs avoiding $H$ as an induced subgraph.
For bipartite $G$ and $H=2K_2$ we have $\fp(G)=\chi'_{2K_2}(G)$.
\end{remark}

\begin{definition}
Given graph $G=(V,E)$, a set $M\subseteq E(G)$ is an \emph{induced matching} if $M$ is a matching and for any two distinct edges $uv, u'v'\in M$ there are no edges of $G$ among
$\{u,u',v,v'\}$ except $uv$ and $u'v'$. In particular, in a bipartite graph this means
$uv'\notin E(G)$ and $u'v\notin E(G)$.
\end{definition}

\begin{definition}
Given graph $G=(V,E)$, the \emph{induced matching number} is the maximum size of an induced matching in $G$.
\end{definition}

\section{General Bounds}
\label{sec:bounds}

\subsection{A lower bound via induced matchings}

\begin{lemma}
\label{lem:lower}
For every bipartite graph $G$,
\[
\fp(G) \ge \nu_{\ind}(G).
\]
\end{lemma}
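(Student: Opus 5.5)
The plan is a pigeonhole argument. Fix a maximum induced matching $M=\{u_1v_1,\dots,u_tv_t\}$ of $G$, so $t=\nu_{\ind}(G)$. Also fix an arbitrary Ferrers partition $E=E_1\dot\cup\cdots\dot\cup E_k$ with $k=\fp(G)$. The goal is to show that no part $E_i$ contains two edges of $M$. Once this is established, the $t$ edges of $M$ lie in pairwise distinct parts, which forces $k\ge t$.

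The key step is the following observation. Suppose two distinct edges $u_av_a$ and $u_bv_b$ of $M$ both lie in the same part $E_i$. Consider the subgraph of $G_i=(U,V,E_i)$ induced on $\{u_a,v_a,u_b,v_b\}$.
\begin{enumerate}
\item Since $M$ is a matching, these are four distinct vertices.
\item Since $E_i\subseteq E$, the only edges of $G_i$ among these vertices are edges of $G$ among them.
\item By the definition of induced matching (in the bipartite setting, $u_av_b,u_bv_a\notin E$), those edges are exactly $u_av_a$ and $u_bv_b$.
\item Both of these edges belong to $E_i$.
\end{enumerate}
So $G_i$ contains an induced $2K_2$, contradicting that $G_i$ is Ferrers.

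The pigeonhole conclusion is then immediate. The map sending each edge of $M$ to the index of the part containing it is injective into $\{1,\dots,k\}$, so $\nu_{\ind}(G)=t\le k=\fp(G)$.

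The only point requiring care is the heredity of non-adjacency. An induced $2K_2$ in $G$ stays induced in any spanning subgraph that keeps both of its edges, because deleting edges cannot create adjacencies. Beyond stating this explicitly, I expect no real obstacle. Proposition~\ref{prop:one-component} is not needed, since the argument uses only the definition of a Ferrers graph as $2K_2$-free.
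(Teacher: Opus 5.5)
Your proof is correct and uses the same pigeonhole argument as the paper: each Ferrers part can contain at most one edge of an induced matching, so a partition into $k$ parts forces $k\ge\nu_{\ind}(G)$. Your explicit remark that deleting edges cannot create the missing cross-edges $u_av_b,u_bv_a$ states precisely a step the paper leaves implicit, and you are right that only the definition of a Ferrers graph is needed here, not Proposition~\ref{prop:one-component}.
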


\begin{proof}
A Ferrers graph contains no induced $2K_2$ (Prop.~\ref{prop:one-component}), hence contains at most one edge from any induced matching.
In an edge partition into $k$ Ferrers graphs, each part contains at most one induced-matching edge, so $\nu_{\ind}(G)\le k$.
\end{proof}

\subsection{An upper bound via Dilworth chain decompositions}

For $u\in U$, let $N(u)\subseteq V$ be its neighborhood.
Let $\mathcal{N}_U=\{N(u):u\in U\}$ partially ordered by inclusion, and write $\width(\mathcal P_U)$ for the maximum size of an antichain of the corresponding neighborhood-inclusion poset (multiplicities do not affect width) on vertices of $U$ (and $V$, resp.) defined by $u\preceq u'$ iff $N(u)\subseteq N(u')$.

\begin{theorem}
\label{thm:upper}
For every bipartite graph $G=(U,V,E)$,
\[
\fp(G) \le \min\big(\width(\mathcal P_U),\,\width(\mathcal P_V)\big).
\]
\end{theorem}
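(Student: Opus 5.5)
By symmetry between $U$ and $V$, it suffices to prove $\fp(G)\le \width(\mathcal P_U)$. The plan is to turn a chain decomposition of the neighborhood poset into a partition of $E$ whose classes are Ferrers graphs.

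Let $w=\width(\mathcal P_U)$. The relation $u\preceq u'$ iff $N(u)\subseteq N(u')$ is only a preorder on $U$, since distinct vertices may have equal neighborhoods. I will therefore apply Dilworth's theorem to the quotient poset $\mathcal N_U$ of distinct neighborhoods, which has width $w$, and obtain chains $\mathcal C_1,\dots,\mathcal C_w$ covering it; taking differences makes them disjoint. I then lift this to a partition $U=U_1\dot\cup\cdots\dot\cup U_w$ by placing each $u$ in the class $U_i$ whose chain contains $N(u)$. Within each $U_i$ the neighborhoods $\{N(u):u\in U_i\}$ are totally ordered by inclusion, with equal neighborhoods trivially comparable.

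Next I define the edge classes $E_i=\{uv\in E: u\in U_i\}$. They partition $E$, because every edge has exactly one endpoint in $U$. Consider $G_i=(U,V,E_i)$. A vertex $u\in U_i$ has neighborhood $N(u)$ in $G_i$, and a vertex $u\notin U_i$ is isolated in $G_i$, with neighborhood $\emptyset$. Hence the family of $U$-side neighborhoods in $G_i$ is $\{N(u):u\in U_i\}\cup\{\emptyset\}$. This family is a chain under inclusion, since $\emptyset$ is contained in every set.

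By the equivalent characterization in the Definition of Ferrers graphs, each $G_i$ is therefore Ferrers, and so $\fp(G)\le w$. Empty classes can only reduce the count, so they cause no difficulty. The main subtlety is the preorder issue: Dilworth's theorem must be applied to the poset of distinct neighborhoods and then lifted back to vertices, as described above. One should also note that isolated vertices on the $U$ side are harmless because $\emptyset$ is comparable with everything. The bound for $V$ follows by the same argument with the roles of $U$ and $V$ swapped, and taking the minimum gives the statement.
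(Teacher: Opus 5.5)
Your proof is correct and follows the paper's argument: apply Dilworth's theorem to the neighborhood-inclusion poset, lift the chain partition to a partition $U=U_1\dot\cup\cdots\dot\cup U_w$, and let $E_i$ be the edges incident to $U_i$, so that each $(U,V,E_i)$ is Ferrers. Your extra care about the preorder (vertices with equal neighborhoods) and about the empty neighborhoods of vertices outside $U_i$ makes explicit what the paper leaves implicit.
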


\begin{proof}
By Dilworth's theorem \cite{Dilworth1950}, $\mathcal{N}_U$ can be partitioned into $w=\width(\mathcal P_U)$ chains.
This yields a partition $U=U_1\dot\cup\cdots\dot\cup U_w$ such that within each $U_i$ the neighborhoods are totally ordered by inclusion.
Let $E_i$ be the edges incident to $U_i$; then $(U,V,E_i)$ is Ferrers and $E=\dot\cup_i E_i$.
Thus $\fp(G)\le w$.
The same argument applies on $V$, giving the stated minimum.
\end{proof}

\section{Exact values and bounds for some families}
\label{sec:families}

Throughout, $P_n$ denotes the $n$-vertex path and $C_n$ the $n$-vertex cycle.

\begin{lemma}
\label{lem:path-max3}
Any Ferrers subgraph of a path or a cycle contains at most $3$ edges.
\end{lemma}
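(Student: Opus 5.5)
The plan is a short structural argument. Before starting, one caveat: the statement must exclude the $4$-cycle. Indeed $C_4=K_{2,2}$ is itself Ferrers, since both vertices on each side have the same neighborhood, and it has $4$ edges. Also, odd cycles are not bipartite. So I would prove the lemma for paths $P_n$ and for even cycles $C_n$ with $n\ge 6$, and state the $C_4$ exception explicitly. This does not affect the later use for lengths $\ge 6$.

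Let $F$ be a Ferrers subgraph of the host $H$, where $H$ is a path or a cycle $C_n$ with $n\ge 6$ even. By Proposition~\ref{prop:one-component}, all edges of $F$ lie in a single connected component of $F$. Every connected subgraph of a path or cycle is either a subpath or, for a cycle, the whole cycle. So it suffices to treat these two cases.

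\emph{The component is a path $x_0x_1\cdots x_m$ with $m\ge 4$.} I would look at $e=x_0x_1$ and $f=x_3x_4$, which are vertex-disjoint. Any further $F$-edge among $\{x_0,x_1,x_3,x_4\}$ would have to be an edge of $H$ joining two of these vertices. There are three candidates to rule out:
\begin{itemize}[leftmargin=2em]
\item $x_1x_3$ and $x_0x_3$: these would join vertices at distance $2$ or $3$ along the path, which would create a cycle of length $3$ or $4$ in $H$. This is impossible because $H$ is a path or has $n\ge 6$.
\item $x_1x_4$: this would create a cycle of length $4$ in $H$, again impossible because $H$ is a path or has $n\ge 6$.
\item $x_0x_4$: this would close a $5$-cycle, impossible since $H$ is bipartite (or a path). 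Moreover, such an edge would not belong to the path component as described anyway.
\end{itemize}
Hence $\{e,f\}$ induces a $2K_2$ in $F$, a contradiction. So $m\le 3$.

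\emph{$F$ is the whole cycle $C_n$ with $n\ge 6$.} I would take two edges $x_0x_1$ and $x_3x_4$ of the cycle. The only cycle edges touching these four vertices besides $e$ and $f$ are $x_1x_2$, $x_2x_3$, $x_{n-1}x_0$ and $x_4x_5$. Since $n\ge 6$, $x_5\ne x_0$ and $x_{n-1}\ne x_4$, so none of these extra edges joins two of the four vertices. This again gives an induced $2K_2$, a contradiction.

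The only delicate step is checking in the path case that no chord-like host edge among the four chosen vertices exists. That check is exactly where the requirement $n\ge 6$ (excluding $C_4$) and bipartiteness (excluding $C_5$) enter, so I would state that case analysis carefully.
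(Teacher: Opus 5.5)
Your proof is correct, and its core is the paper's argument. By Proposition~\ref{prop:one-component} only one component of $F$ carries edges. If that component is a path with at least four edges, its end edges $x_0x_1$ and $x_3x_4$ induce a $2K_2$.

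Where you go beyond the paper is the case in which $F$ is the whole cycle. The paper asserts that every component of a subgraph of a cycle is a path, which is false when the subgraph is $C_n$ itself. Your caveat shows this is a genuine error, not just an omission: $C_4=K_{2,2}$ is Ferrers with $4$ edges. So the lemma as stated fails for $C_4$, and Theorem~\ref{thm:cycle} fails at $n=4$, where $\fp(C_4)=1\neq\lceil 4/3\rceil$. Your separate treatment of the whole cycle for $n\ge 6$ is exactly what is needed to make the argument complete.

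One simplification is available in the path case: you do not need the host-chord analysis. The component of $F$ on $x_0,\dots,x_m$ is exactly that path. Any $F$-edge joining two of $x_0,x_1,x_3,x_4$ would lie in this component and so would be a path edge. The only path edges among these four vertices are $e$ and $f$. You mention this in passing for $x_0x_4$, but it disposes of all four candidates at once. It uses neither bipartiteness nor $n\ge 6$; those hypotheses are needed only in the whole-cycle case. Your chord argument is still valid and proves slightly more, namely that $\{e,f\}$ is already an induced matching in the host.
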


\begin{proof}
Any subgraph of a path or cycle has maximum degree at most $2$, hence every connected component is a path.
If a Ferrers subgraph contains a component isomorphic to $P_5$, then its two end edges are vertex-disjoint and there is no edge between their endpoints inside the subgraph; hence they induce a $2K_2$, a contradiction. Thus each nontrivial component has at most $3$ edges. Finally, by Prop.~\ref{prop:one-component} a Ferrers graph has at most one nontrivial component, so the entire Ferrers subgraph has at most $3$ edges.
\end{proof}

\subsection{Paths}

\begin{theorem}
\label{thm:path}
For the path $P_n$ on $n\ge 2$ vertices,
\[
\fp(P_n)=\left\lceil \frac{n-1}{3}\right\rceil
\qquad\text{and}\qquad
\nu_{\ind}(P_n)=\left\lceil \frac{n-1}{3}\right\rceil.
\]
In particular, $\fp(P_n)=\nu_{\ind}(P_n)$ for all $n$.
\end{theorem}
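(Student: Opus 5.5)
The plan is to sandwich both parameters between two easy bounds. Write the edges of $P_n$ as $e_1,\dots,e_{n-1}$ in path order, with $e_i=v_iv_{i+1}$, and set $m=n-1$.

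\emph{Upper bound on $\fp(P_n)$.} Split the edge sequence into consecutive blocks $\{e_1,e_2,e_3\},\{e_4,e_5,e_6\},\dots$, with the last block possibly having one or two edges. This gives $\lceil m/3\rceil$ blocks. Each block is a subpath with at most $3$ edges, so it is isomorphic to $P_2$, $P_3$ or $P_4$. Each of these is Ferrers:
\begin{itemize}
\item in $P_4=v_1v_2v_3v_4$, on the side $\{v_2,v_4\}$ we have $N(v_4)=\{v_3\}\subseteq N(v_2)=\{v_1,v_3\}$, so the neighborhoods are nested;
\item $P_2$ and $P_3$ are stars, so they are trivially Ferrers.
\end{itemize}
The remaining vertices are isolated in each part, which is allowed. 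Hence $\fp(P_n)\le\lceil m/3\rceil$.

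\emph{Lower bound on $\fp(P_n)$.} Lemma~\ref{lem:path-max3} says every Ferrers subgraph of $P_n$ has at most $3$ edges. A partition into $k$ parts therefore covers at most $3k$ edges, so $3k\ge m$ and $\fp(P_n)\ge\lceil m/3\rceil$. This settles the value of $\fp$.

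\emph{The induced matching number.} For the lower bound, take the edges $e_1,e_4,e_7,\dots$, i.e.\ all $e_{3j+1}$ with $3j+1\le m$. There are exactly $\lceil m/3\rceil$ of them. Consecutive chosen edges $e_i,e_{i+3}$ are separated by two unchosen edges, so their endpoint sets $\{v_i,v_{i+1}\}$ and $\{v_{i+3},v_{i+4}\}$ are disjoint. No edge of $P_n$ joins them, since the only edge between $v_{i+1}$ and $v_{i+3}$ goes through $v_{i+2}$. Hence these edges form an induced matching. For the upper bound, use Lemma~\ref{lem:lower}: $\nu_{\ind}(P_n)\le\fp(P_n)=\lceil m/3\rceil$.

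Alternatively, one can argue directly: two edges $e_i,e_j$ with $i<j$ of an induced matching must satisfy $j\ge i+3$, which again gives the bound $\lceil m/3\rceil$.

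Combining these gives equality of both quantities. There is no real obstacle here. The only points needing care are checking that $P_4$ is Ferrers, and the ceiling arithmetic: the indices $1,4,\dots\le m$ number exactly $\lceil m/3\rceil$.
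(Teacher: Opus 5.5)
Your proof is correct and follows the paper's argument: consecutive blocks of three edges together with Lemma~\ref{lem:path-max3} pin down $\fp(P_n)$, and every third edge gives the induced matching. The only difference is cosmetic. You obtain $\nu_{\ind}(P_n)\le\lceil m/3\rceil$ from Lemma~\ref{lem:lower}, whereas the paper uses the direct three-consecutive-edges argument, which you also give as your alternative.
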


\begin{proof}
Let $m=n-1$ be the number of edges of $P_n$.
Partition the edges into consecutive blocks of $3$ edges (and a final block of size $1$ or $2$ if needed).
Each block induces $P_2$, $P_3$, or $P_4$, hence is Ferrers, so $\fp(P_n)\le \lceil m/3\rceil$.
By Lemma~\ref{lem:path-max3}, every Ferrers subgraph of $P_n$ has at most $3$ edges, so $\fp(P_n)\ge \lceil m/3\rceil$.

For the induced matching number, selecting every third edge along the path gives an induced matching of size $\lceil m/3\rceil$, and no induced matching can use more than one edge among any $3$ consecutive edges, so $\nu_{\ind}(P_n)=\lceil m/3\rceil$.
\end{proof}

\subsection{Even cycles}

\begin{theorem}
\label{thm:cycle}
For the cycle $C_n$ on $n\ge 4$ vertices,
\[
\fp(C_n)=\left\lceil \frac{n}{3}\right\rceil
\qquad\text{and}\qquad
\nu_{\ind}(C_n)=\left\lfloor \frac{n}{3}\right\rfloor.
\]
In particular, for even cycles $C_{2t}$ (which are bipartite),
\[
\fp(C_{2t})\in\Big\{\nu_{\ind}(C_{2t}),\,\nu_{\ind}(C_{2t})+1\Big\}.
\]
\end{theorem}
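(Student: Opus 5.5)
The plan is to treat the two equalities separately, each as a matching upper and lower bound, and then read off the comparison for even cycles. Label the edges of $C_n$ cyclically as $e_1,\dots,e_n$, with $e_i$ and $e_{i+1}$ sharing a vertex (indices mod $n$).

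\emph{The value of $\fp$.} For the lower bound I will use Lemma~\ref{lem:path-max3} directly. Every part of a Ferrers edge partition of $C_n$ has at most $3$ edges, and there are $n$ edges, so $\fp(C_n)\ge\lceil n/3\rceil$.

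For the upper bound I will cut the cyclic sequence into consecutive blocks $\{e_1,e_2,e_3\},\{e_4,e_5,e_6\},\dots$, with a final block of $1$ or $2$ edges if $3\nmid n$. This gives $\lceil n/3\rceil$ blocks. Since $n\ge4$, no block is the whole cycle, so each block spans a path with $1$, $2$ or $3$ edges, that is, $P_2$, $P_3$ or $P_4$. Each of these is Ferrers (with the other vertices isolated), as in the proof of Theorem~\ref{thm:path}.

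The only point needing care is $n=4$, where one block is three edges of $C_4$. In the host graph, the two end edges of this block are joined by the fourth edge. Ferrerness is a property of the part itself, though, and inside the part we have an induced $P_4$, which contains no induced $2K_2$. So the argument goes through. For odd $n$ the same counting applies verbatim; the bipartite remark in the statement only concerns even $n$.

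\emph{The value of $\nu_{\ind}$.} Two edges $e_i,e_j$ can both lie in an induced matching only if their cyclic distance is at least $3$. If $e_j=e_{i\pm1}$, they share a vertex. If $e_j=e_{i\pm2}$, the edge $e_{i\pm1}$ joins an endpoint of one to an endpoint of the other.

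For the upper bound, suppose $M$ is an induced matching and assign to each $e_i\in M$ the triple $\{e_i,e_{i+1},e_{i+2}\}$. By the distance condition these triples are pairwise disjoint, so $3|M|\le n$ and hence $|M|\le\lfloor n/3\rfloor$.

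For the construction, take $k=\lfloor n/3\rfloor$ and $M=\{e_1,e_4,\dots,e_{3k-2}\}$. Consecutive chosen edges are at distance $3$. The wrap-around distance from $e_{3k-2}$ to $e_{n+1}=e_1$ is $n+3-3k\ge3$. Hence $M$ is an induced matching of size $\lfloor n/3\rfloor$.

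\emph{Conclusion.} Since $\lceil n/3\rceil-\lfloor n/3\rfloor\in\{0,1\}$, the containment $\fp(C_{2t})\in\{\nu_{\ind}(C_{2t}),\nu_{\ind}(C_{2t})+1\}$ follows immediately. I expect no real obstacle. The only steps needing attention are the wrap-around distance in the matching construction and the small case $n=4$ in the block decomposition, both handled above.
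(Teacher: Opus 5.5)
Your argument is the paper's argument almost step for step: consecutive blocks of three edges for the upper bound, Lemma~\ref{lem:path-max3} for the lower bound, every third edge and disjoint triples for $\nu_{\ind}$. It also shares the paper's flaw, which is that the lower bound fails at $n=4$. The cycle $C_4$ is $K_{2,2}$. This graph is complete bipartite, hence Ferrers, since both $U$-side neighborhoods equal $V$. So $E(C_4)$ as a single part is already a Ferrers partition, and $\fp(C_4)=1\ne 2=\lceil 4/3\rceil$. Lemma~\ref{lem:path-max3} is false for $C_4$. Its proof claims that every component of a subgraph of a cycle is a path, which overlooks the cycle itself. You singled out $n=4$ as the delicate case, but you only checked the upper-bound direction, which is harmless there. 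The real problem is that a part may contain all four edges. The statement as written is therefore false at $n=4$, and no proof of it can succeed.

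The missing step is to rule out the whole cycle as a part before invoking the lemma, and this only works for $n\ge 5$. For odd $n$, the whole cycle is not bipartite; this is also the only reason your remark that odd $n$ ``applies verbatim'' is valid. For even $n\ge 6$, the edges $e_1=v_1v_2$ and $e_4=v_4v_5$ already form an induced $2K_2$ in $C_n$. Hence every Ferrers part is a proper subgraph, i.e.\ a linear forest. The lemma's reasoning then gives at most three edges per part: at most one nontrivial component, and no $P_5$. So the formula $\fp(C_n)=\lceil n/3\rceil$ holds exactly for $n\ge 5$.

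Your $\nu_{\ind}$ computation is correct for all $n\ge 4$, and your explicit wrap-around check is more careful than the paper's. The final containment $\fp(C_{2t})\in\{\nu_{\ind}(C_{2t}),\nu_{\ind}(C_{2t})+1\}$ is still true at $t=2$, because $\fp(C_4)=\nu_{\ind}(C_4)=1$. However, in that case it must be verified directly rather than deduced from the incorrect formula.
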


\begin{proof}
The cycle $C_n$ has $n$ edges.
Partition the edges into consecutive blocks of $3$ around the cycle (with a final block of size $1$ or $2$ if needed).
Each block induces a path on at most $4$ vertices, hence is Ferrers, so $\fp(C_n)\le \lceil n/3\rceil$.
Lemma~\ref{lem:path-max3} gives the reverse inequality.

For $\nu_{\ind}(C_n)$, selecting every third edge yields an induced matching of size $\lfloor n/3\rfloor$.
Conversely, an induced matching cannot contain two edges within the same triple of consecutive edges, so it has size at most $\lfloor n/3\rfloor$.
\end{proof}

\subsection{Ladders}

Let $L_n=P_n\square P_2$ denote the ladder graph on $2n$ vertices.
We label the vertices in the two rails by $t_1,\dots,t_n$ (top) and $b_1,\dots,b_n$ (bottom),
and the edges are the rails $t_it_{i+1}$, $b_ib_{i+1}$ for $1\le i\le n-1$, together with the rungs $t_ib_i$ for
$1\le i\le n$.

\begin{theorem}\label{thm:nuind-ladder}
For $n\ge 1$,
\[
\nu_{\ind}(L_n)=\left\lceil\frac{n}{2}\right\rceil.
\]
\end{theorem}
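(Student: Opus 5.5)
We prove the two inequalities separately: a construction for the lower bound and a column-counting argument for the upper bound.

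\emph{Lower bound.} Take the rungs $t_ib_i$ for odd $i$, i.e.\ $i=1,3,5,\dots$. There are $\lceil n/2\rceil$ of them and they are pairwise vertex-disjoint. For odd $i<j$ we have $j\ge i+2$, so no $t$- or $b$-vertex of column $i$ is adjacent to one of column $j$. Rail edges join only consecutive columns, and rungs join only vertices of the same column. Hence the only edges among the four endpoints are the two rungs themselves, so this is an induced matching. This gives $\nu_{\ind}(L_n)\ge\lceil n/2\rceil$.

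\emph{Upper bound.} Let $M$ be an induced matching. Assign each edge of $M$ a column index: the rung $t_ib_i$ gets $i$, and a rail edge between columns $i$ and $i+1$ gets $i$. The key claim is that two distinct edges of $M$ cannot have column sets that meet or lie in adjacent columns. More precisely, if their spans $\{i\}$ or $\{i,i+1\}$ are at distance at most $1$, then some edge of $L_n$ joins an endpoint of one to an endpoint of the other (or the two edges share a vertex). We check this by a short case analysis on the types rung/rail and top/bottom.

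\begin{itemize}
\item Two rungs in columns $i$ and $i+1$ are joined by the rail edge $t_it_{i+1}$.
\item A rung at $i$ and a rail edge touching column $i$ or $i\pm1$: either they share a vertex, or the rail's endpoint in column $i$ or $i\pm 1$ is adjacent to $t_i$ or $b_i$. For example, a rung at $i$ and the rail $b_{i+1}b_{i+2}$ are joined by the edge $b_ib_{i+1}$.
\item Two rail edges on the same rail whose spans are at distance at most $1$ either share a vertex or are joined by a rail edge.
\item A top rail and a bottom rail whose spans overlap are joined by a rung, since both contain some column $j$ and $t_jb_j\in E$.
\end{itemize}

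The one delicate case is a top rail spanning $\{i,i+1\}$ and a bottom rail spanning $\{i+2,i+3\}$. Their spans are adjacent but disjoint, and $t_{i+1}$ is adjacent to $b_{i+1}$, which is not an endpoint. So this pair can coexist in $M$, and the naive claim fails here.

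The fix is to charge each edge of $M$ to the set of columns containing its vertices: one column for a rung, two for a rail edge. Under this charging, different edges of $M$ occupy disjoint column sets. A rung needs column $i$ alone. Its neighbouring columns $i\pm1$ cannot contain a vertex of another edge of $M$, since every vertex in column $i\pm1$ is adjacent to $t_i$ or $b_i$. So a rung blocks columns $i-1,i,i+1$.

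We then argue the bound by induction on $n$, removing the leftmost edge of $M$ together with the columns it blocks.

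\begin{itemize}
\item If the leftmost edge is a rung at column $i$, delete columns $1,\dots,i+1$. This removes at least $2$ columns and one edge of $M$ (or the matching ends).
\item If the leftmost edge is a rail edge on columns $\{i,i+1\}$, the column $i+2$ vertex on the same rail is adjacent to an endpoint. The other rail's vertex at $i+2$ may still be used, so we delete columns $1,\dots,i+1$, which is at least $2$ columns.
\end{itemize}

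What remains is an induced matching of a ladder $L_{n'}$ with $n'\le n-2$, plus possibly a partially used column at the left. In that case the remainder is a ladder with one pendant-constrained column, and the induction hypothesis must be strengthened to cover it.

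I would aim for the statement: an induced matching in $L_n$ has at most $\lceil n/2\rceil$ edges, each removal step consumes at least two columns, and the final step may consume just one. Making the strengthened induction hypothesis precise, so that it handles the half-used boundary column left by a rail edge, is the main obstacle. I would first try the invariant ``a ladder whose first column has only its bottom vertex available'' and show it also has bound $\lceil n/2\rceil$.
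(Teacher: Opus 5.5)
Your lower bound is the same as the paper's and is fine. The upper bound, however, is left unfinished. You end by proposing a strengthened induction hypothesis (a ladder whose first column has only one usable vertex) and call it the main obstacle. No strengthening is needed, and your argument closes as it stands. Let $e\in M$ be the edge whose leftmost column $i$ is smallest, so $e$ is a rung at $i$ or a rail on $\{i,i+1\}$. Combine minimality of $i$ with your two observations: distinct edges of $M$ use disjoint column sets, and a rung at $i$ makes columns $i\pm1$ unusable. Together they show that every other edge of $M$ lies in columns $i+2,\dots,n$. The sub-ladder on these columns is an induced subgraph of $L_n$ isomorphic to $L_{n-i-1}$, and $M\setminus\{e\}$ is an induced matching of it. The fact that one vertex of column $i+2$ is also unusable only restricts $M\setminus\{e\}$ further, which cannot hurt an upper bound. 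So plain strong induction gives $|M|\le 1+\lceil (n-i-1)/2\rceil\le 1+\lceil (n-2)/2\rceil=\lceil n/2\rceil$.

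You also abandoned the direct route too early. Your ``delicate case'' refutes the claim about \emph{spans}, but not the corresponding claim about the \emph{indices} you assigned: $t_it_{i+1}$ and $b_{i+2}b_{i+3}$ have indices $i$ and $i+2$. The paper's proof uses exactly these indices (its start columns $s(e)$) and checks that no two edges of $M$ have consecutive indices. Each of the nine pairs (edge of index $i$, edge of index $i+1$) either shares a vertex or is joined by one of $t_it_{i+1}$, $b_ib_{i+1}$, $t_{i+1}b_{i+1}$. Two edges of $M$ also cannot share an index, a point the paper leaves implicit: each rail of index $i$ shares a vertex with the rung $t_ib_i$, and the two rails of index $i$ are joined by that rung. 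Hence the indices form a subset of $\{1,\dots,n\}$ with pairwise gaps at least $2$, and $|M|\le\lceil n/2\rceil$ follows at once, with no induction.
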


\begin{proof}
\emph{Lower bound.}
The set of rungs $\{t_ib_i : i\ \text{odd}\}$ is a matching of size $\lceil n/2\rceil$.
If $i$ and $j$ are distinct odd indices, then $|i-j|\ge 2$, hence there is no edge of $L_n$ between the vertex pairs
$\{t_i,b_i\}$ and $\{t_j,b_j\}$ (all edges of $L_n$ lie within a column or between adjacent columns).
Thus these rungs form an induced matching, and $\nu_{\ind}(L_n)\ge \lceil n/2\rceil$.

\emph{Upper bound.}
For an edge $e$ of $L_n$, define its \emph{start column} $s(e)$ by
\[
s(t_ib_i)=i,\qquad s(t_it_{i+1})=i,\qquad s(b_ib_{i+1})=i.
\]
We claim that if $M$ is an induced matching in $L_n$, then the start columns $\{s(e):e\in M\}$ differ by at least $2$:
there do not exist distinct $e,f\in M$ with $s(f)=s(e)+1$.

Indeed, fix $i$ and suppose $s(e)=i$ and $s(f)=i+1$.
Every edge with start column $i$ contains at least one of $t_i,b_i,t_{i+1},b_{i+1}$, and every edge with start column $i+1$
contains at least one of $t_{i+1},b_{i+1},t_{i+2},b_{i+2}$.
In all possible cases, there is an edge of $L_n$ between an endpoint of $e$ and an endpoint of $f$:
either the two edges share a vertex, or one uses $t_{i+1}$ and the other uses $b_{i+1}$ (joined by the rung $t_{i+1}b_{i+1}$),
or one uses $t_{i+1}$ and the other uses $t_{i+2}$ (joined by $t_{i+1}t_{i+2}$), or one uses $b_{i+1}$ and the other uses $b_{i+2}$
(joined by $b_{i+1}b_{i+2}$).
Any such adjacency between endpoints is forbidden in an induced matching, so $s(f)\neq s(e)+1$.

Consequently, the set $\{s(e):e\in M\}\subseteq\{1,\dots,n\}$ has pairwise gaps at least $2$, hence has size at most $\lceil n/2\rceil$.
Therefore $\nu_{\ind}(L_n)\le \lceil n/2\rceil$.
\end{proof}

\begin{corollary}\label{cor:fp-ladder-lb}
For all $n\ge 1$,
\[
\left\lceil\frac{n}{2}\right\rceil \le \fp(L_n)\le n.
\]
\end{corollary}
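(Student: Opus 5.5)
The lower bound comes directly from Lemma~\ref{lem:lower}: the ladder $L_n$ is bipartite (color $t_i$ and $b_i$ by the parity of $i$, with opposite colors for $t_i$ and $b_i$). Lemma~\ref{lem:lower} gives $\fp(L_n)\ge\nu_{\ind}(L_n)$, and Theorem~\ref{thm:nuind-ladder} gives $\nu_{\ind}(L_n)=\lceil n/2\rceil$. So the lower bound needs no new work.

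For the upper bound $\fp(L_n)\le n$ I will give an explicit partition into $n$ Ferrers parts, one per column. For $1\le i\le n-1$ let
\[
E_i=\{t_ib_i,\ t_it_{i+1},\ b_ib_{i+1}\},
\]
and let $E_n=\{t_nb_n\}$. Every edge of $L_n$ has a unique start column $s(e)$ as defined in the proof of Theorem~\ref{thm:nuind-ladder}, and $E_i$ is exactly the set of edges with $s(e)=i$. Hence these sets partition $E(L_n)$.

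It remains to check that each part is Ferrers. The part $E_n$ is a single edge, so it is Ferrers. For $i\le n-1$, the edges of $E_i$ form the path $t_{i+1}\,t_i\,b_i\,b_{i+1}$, a copy of $P_4$ with three edges. Its two end edges $t_{i+1}t_i$ and $b_ib_{i+1}$ are joined by the middle edge $t_ib_i$, so they do not form an induced $2K_2$ inside $(U,V,E_i)$. Since a path on three edges contains no other pair of disjoint edges, each $(U,V,E_i)$ is Ferrers. The remaining vertices of the part are isolated, which is allowed, and by Proposition~\ref{prop:one-component} only the single nontrivial component matters. This gives $\fp(L_n)\le n$.

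There is no real obstacle. The only points that need care are that the parts are taken as subgraphs on their own edge sets, so host edges such as $t_{i+1}b_{i+1}$ lying in other parts are irrelevant, and that the degenerate case $n=1$ works: then $L_1$ is a single rung and both bounds equal $1$.
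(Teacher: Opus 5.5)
Your proof is correct. The lower bound is exactly the paper's argument: Lemma~\ref{lem:lower} combined with Theorem~\ref{thm:nuind-ladder}.

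For the upper bound you take a different route. You exhibit an explicit partition of $E(L_n)$ by start column into $n-1$ copies of $P_4$ plus the last rung. The paper instead invokes Theorem~\ref{thm:upper} in one line, asserting that the neighborhood poset has width $n$. That assertion is not accurate for $n\ge 2$:
\begin{itemize}
\item $L_2=C_4$ is itself Ferrers;
\item for $n\ge 3$, $N(t_1)=\{t_2,b_1\}\subseteq N(b_2)$, and $t_1,b_2$ lie in the same colour class.
\end{itemize}
The paper's argument only needs $\width(\mathcal P_U)\le n$, which is automatic because each colour class has exactly $n$ vertices. Carried out carefully, the Dilworth route even gives a bound slightly below $n$.

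What each route buys: the paper's is shorter, given the general machinery. Yours is self-contained, avoids any width computation, and reuses the start-column bookkeeping from Theorem~\ref{thm:nuind-ladder}.

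One small correction: the isolated vertices in each part are harmless not because of Proposition~\ref{prop:one-component}. They are harmless simply because every vertex of an induced $2K_2$ has degree $1$ in it.
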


\begin{proof}
Lower bound immediately follows form Theorem~\ref{thm:nuind-ladder} and Lemma~\ref{lem:lower}. It is easy to see that the
Dilworth-width of the corresponding poset is $n$ and apply Theorem~\ref{thm:upper}.
\end{proof}

\begin{remark}
For the ladder graph, is easy to see that the lower bound is the truth. Consider the ladder of 8 vertices. Two letter ``P" graph put together in a right way covers the graph, and they are Ferrers. We leave the rest to the Reader.
\end{remark}

\section{Additivity and Separations}
\label{sec:separations}

\subsection{Additivity over disjoint unions}

\begin{lemma}
\label{lem:additivity}
Let $G$ and $H$ be vertex-disjoint graphs, each with at least one edge, and let $G\dot\cup H$ denote their disjoint union.
Then
\[
\fp(G\dot\cup H)=\fp(G)+\fp(H),
\qquad
\nu_{\ind}(G\dot\cup H)=\nu_{\ind}(G)+\nu_{\ind}(H).
\]
\end{lemma}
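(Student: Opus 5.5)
For $\fp$ we prove two inequalities. For ``$\le$'', take optimal partitions $E(G)=E_1\dot\cup\cdots\dot\cup E_a$ and $E(H)=F_1\dot\cup\cdots\dot\cup F_b$ with $a=\fp(G)$ and $b=\fp(H)$. Each part, viewed as a spanning subgraph of $G\dot\cup H$ on the combined bipartition $(U_G\cup U_H,V_G\cup V_H)$, is still Ferrers. Adding isolated vertices cannot create an induced $2K_2$, since such a copy uses four vertices of positive degree in the part. Hence these $a+b$ parts give $\fp(G\dot\cup H)\le a+b$.

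For ``$\ge$'', take any partition of $E(G\dot\cup H)$ into $k$ Ferrers parts. By Proposition~\ref{prop:one-component}, each part has at most one connected component containing an edge. Since $G$ and $H$ lie in different components of $G\dot\cup H$, every part consists entirely of edges of $G$ or entirely of edges of $H$ (or is empty). The parts using edges of $G$ partition $E(G)$, and each remains Ferrers when restricted to $G$'s vertex set, because an induced subgraph of a $2K_2$-free graph is $2K_2$-free. So there are at least $\fp(G)$ such parts, and likewise at least $\fp(H)$ parts using edges of $H$. These two families are disjoint, so $k\ge\fp(G)+\fp(H)$. The hypothesis that both graphs have an edge is needed only so that both counts are meaningful; it also ensures $\fp(G),\fp(H)\ge1$.

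For $\nu_{\ind}$, note that no edge of $G\dot\cup H$ joins $V(G)$ to $V(H)$. Thus the union of an induced matching of $G$ and one of $H$ is an induced matching of $G\dot\cup H$, giving ``$\ge$''. Conversely, an induced matching $M$ of $G\dot\cup H$ splits as $M\cap E(G)$ and $M\cap E(H)$. Each piece is an induced matching in its own graph, because adjacency within $G$ is the same in $G\dot\cup H$. This gives ``$\le$''.

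The only step needing care is the ``$\ge$'' direction for $\fp$. There we must check that a single Ferrers part cannot mix edges from $G$ and $H$, which is exactly Proposition~\ref{prop:one-component}. We must also check that restricting a part to one side preserves being Ferrers.
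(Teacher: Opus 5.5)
Your proof is correct and follows the same approach as the paper. In both, the key point is that no edge joins $V(G)$ to $V(H)$, so a Ferrers part containing edges of both would contain an induced $2K_2$ (Proposition~\ref{prop:one-component}); hence every part lies within $G$ or within $H$. You additionally spell out the easy ``$\le$'' direction and the restriction step, which the paper leaves implicit.
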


\begin{proof}
Induced matchings are additive over disjoint unions.

For $\fp$, note that if a Ferrers graph contains edges from two different components of $G\dot\cup H$, then it contains two vertex-disjoint edges with no edges between their endpoints (since the host has no such edges), hence an induced $2K_2$, impossible.
Thus every Ferrers part in a partition of $G\dot\cup H$ lies entirely within one component, and the minimum number of parts is the sum.
\end{proof}

\subsection{A growing gap between $\fp$ and $\nu_{\ind}$}

\begin{corollary}
\label{cor:gap}
Let $F_t$ be the disjoint union of $t$ copies of $C_8$.
Then
\[
\fp(F_t)-\nu_{\ind}(F_t)=t.
\]
\end{corollary}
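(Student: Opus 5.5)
We compute both parameters for a single copy of $C_8$ and then sum over the $t$ components with Lemma~\ref{lem:additivity}. Since $C_8$ has at least one edge, repeated application of Lemma~\ref{lem:additivity} (induction on $t$) gives
\[
\fp(F_t)=t\cdot \fp(C_8),\qquad \nu_{\ind}(F_t)=t\cdot \nu_{\ind}(C_8).
\]
It therefore suffices to show that $\fp(C_8)-\nu_{\ind}(C_8)=1$.

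By Theorem~\ref{thm:cycle} with $n=8$ we get $\fp(C_8)=\lceil 8/3\rceil=3$ and $\nu_{\ind}(C_8)=\lfloor 8/3\rfloor=2$, so the difference is exactly $1$. Multiplying by $t$ gives $\fp(F_t)-\nu_{\ind}(F_t)=3t-2t=t$.

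For completeness I would confirm the two values for $C_8$ directly, since everything rests on them. For $\fp(C_8)=3$: by Lemma~\ref{lem:path-max3} each Ferrers part has at most $3$ edges, so covering $8$ edges needs at least $\lceil 8/3\rceil=3$ parts, and blocks of sizes $3,3,2$ of consecutive edges achieve this. For $\nu_{\ind}(C_8)=2$: edges $e_1,e_4$ (indexing $e_1,\dots,e_8$ cyclically) form an induced matching. Conversely, any two edges of an induced matching must be at cyclic index distance at least $3$, so three edges would need total cyclic gap at least $9>8$.

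There is no real obstacle here. The only point needing care is that Lemma~\ref{lem:additivity} requires each component to have at least one edge, which holds for $C_8$. The substance of the corollary is that the length $8\not\equiv 0 \pmod 3$ produces a gap of exactly one per copy, and additivity accumulates these gaps linearly.
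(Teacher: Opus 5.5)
Your proof is correct and is exactly the paper's argument: take $\fp(C_8)=3$ and $\nu_{\ind}(C_8)=2$ from Theorem~\ref{thm:cycle}, then sum over the $t$ copies using Lemma~\ref{lem:additivity}. Your direct check of the two values for $C_8$ is also sound; the $3$-edge bound of Lemma~\ref{lem:path-max3} does hold for $C_8$, although it fails for $C_4=K_{2,2}$ (which is itself Ferrers), so it matters that you only need the case $n=8$.
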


\begin{proof}
By Theorem~\ref{thm:cycle}, $\fp(C_8)=\lceil 8/3\rceil=3$ and $\nu_{\ind}(C_8)=\lfloor 8/3\rfloor=2$.
Apply Lemma~\ref{lem:additivity}.
\end{proof}

\subsection{A separation for the Dilworth-width upper bound}

\begin{definition}
The \emph{crown graph} $H_n$ ($n\ge 3$) is obtained from $K_{n,n}$ by deleting a perfect matching:
$U=\{u_1,\dots,u_n\}$, $V=\{v_1,\dots,v_n\}$, and $u_i v_j\in E(H_n)$ iff $i\neq j$.
\end{definition}

\begin{theorem}
\label{thm:crown}
For the crown graph $H_n$ with $n\ge 3$,
\[
\fp(H_n)=2,
\qquad
\width(\mathcal P_U)=\width(\mathcal P_V)=n.
\]
In particular, the Dilworth-width bound in Theorem~\ref{thm:upper} can be off by a factor $\Theta(n)$.
\end{theorem}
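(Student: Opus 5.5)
We need three things: the lower bound $\fp(H_n)\ge 2$, an explicit partition into two Ferrers graphs, and the width computation.

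For the lower bound, we show $H_n$ itself is not Ferrers when $n\ge 3$. We exhibit an induced $2K_2$ directly: take the edges $u_1v_2$ and $u_2v_1$. They are vertex-disjoint, and the cross pairs $u_1v_1$ and $u_2v_2$ are non-edges by definition. Hence $\{u_1,u_2,v_1,v_2\}$ induces exactly $2K_2$. Equivalently, $\nu_{\ind}(H_n)\ge 2$, so Lemma~\ref{lem:lower} gives $\fp(H_n)\ge 2$.

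For the upper bound we use the matrix picture. The biadjacency matrix is $J-I$. We split it into the strictly upper triangular part $E_1=\{u_iv_j: i<j\}$ and the strictly lower triangular part $E_2=\{u_iv_j: i>j\}$. These are disjoint and their union is $E(H_n)$. In $(U,V,E_1)$ we have $N(u_i)=\{v_{i+1},\dots,v_n\}$, and these sets form a chain under inclusion. In $(U,V,E_2)$ we have $N(u_i)=\{v_1,\dots,v_{i-1}\}$, which is again a chain. By the definition of Ferrers graphs, both parts are Ferrers, so $\fp(H_n)\le 2$.

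For the widths, $N(u_i)=V\setminus\{v_i\}$. These $n$ sets are distinct and have equal size $n-1$, so no one contains another. Thus all of $U$ is an antichain and $\width(\mathcal P_U)=n$, the maximum possible. By the symmetry $u_i\leftrightarrow v_i$, also $\width(\mathcal P_V)=n$. The ratio $n/2$ then gives the claimed $\Theta(n)$ separation in Theorem~\ref{thm:upper}.

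There is no real obstacle. The only point needing care is the hypothesis $n\ge 3$. It is not needed for the $2K_2$ argument, which works already at $n=2$, where $H_2=2K_2$. It presumably serves to make $H_n$ connected and nondegenerate, so I would remark on that rather than use it.
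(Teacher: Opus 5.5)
Your proof is correct and follows the paper's argument essentially step for step: the induced $2K_2$ on $u_1v_2,u_2v_1$ gives the lower bound, the strictly upper/lower triangular split of $J-I$ gives the upper bound, and the pairwise incomparable neighborhoods $V\setminus\{v_i\}$ give the widths. Your side remark is also accurate: the hypothesis $n\ge 3$ is not needed for the stated equalities, since they also hold for $H_2=2K_2$.
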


\begin{proof}
Order $U$ and $V$ by indices.
Partition $E(H_n)$ into
\[
E^{+}:=\{u_i v_j : i<j\},\qquad
E^{-}:=\{u_i v_j : i>j\}.
\]
Then $E(H_n)=E^+\dot\cup E^-$.

In $G^+=(U,V,E^+)$, we have
$N_{G^+}(u_1)\supseteq N_{G^+}(u_2)\supseteq\cdots\supseteq N_{G^+}(u_n)=\varnothing$,
so $G^+$ is Ferrers; similarly $G^-$ is Ferrers.
Thus $\fp(H_n)\le 2$.
Since $H_n$ contains an induced $2K_2$ (e.g.\ edges $u_1v_2$ and $u_2v_1$), it is not Ferrers, hence $\fp(H_n)\ge 2$.

Finally, $N(u_i)=V\setminus\{v_i\}$, $i=1,\dots, n$, and these $n$ sets are pairwise incomparable, hence form an antichain of size $n$.
So $\width(\mathcal P_U)=n$, and it holds symmetrically on $V$.
\end{proof}

\begin{theorem}
\label{thm:Kmn-minus-matching}
Let $G=K_{m,n}\setminus M$, where $M$ is a matching of size $t$ in $K_{m,n}$.
Then
\[
\fp(G)=
\begin{cases}
1, & t\le 1,\\
2, & t\ge 2.
\end{cases}
\]
\end{theorem}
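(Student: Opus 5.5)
The plan is to treat the two regimes of the statement separately. For $t\le 1$ we show directly that $G$ is Ferrers. For $t\ge 2$ we get the lower bound from Lemma~\ref{lem:lower} (or just from the definition) and the upper bound from an explicit two-part split modelled on the crown-graph construction in Theorem~\ref{thm:crown}.

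\emph{Case $t\le 1$.} If $t=0$, then $G=K_{m,n}$ and every $u\in U$ has $N(u)=V$, so the neighborhoods form a (trivial) chain. If $t=1$ with $M=\{u_1v_1\}$, then $N(u_1)=V\setminus\{v_1\}$ and $N(u)=V$ for every $u\ne u_1$. These sets are nested, so $G$ is Ferrers and $\fp(G)=1$.

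\emph{Case $t\ge 2$, lower bound.} Label the vertices so that $M=\{u_iv_i:1\le i\le t\}$. The edges $u_1v_2$ and $u_2v_1$ both lie in $G$. Their cross pairs $u_1v_1$ and $u_2v_2$ are in $M$, so they are non-edges of $G$. Hence $\{u_1v_2,u_2v_1\}$ is an induced $2K_2$ (an induced matching of size $2$), and Lemma~\ref{lem:lower} gives $\fp(G)\ge 2$.

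\emph{Case $t\ge 2$, upper bound.} Index $U=\{u_1,\dots,u_m\}$ and $V=\{v_1,\dots,v_n\}$ as above, and split the edges by comparing indices:
\[
E^{+}:=\{u_iv_j\in E(G): i\le j\},\qquad E^{-}:=\{u_iv_j\in E(G): i>j\}.
\]
The diagonal edges $u_iv_i$ with $t<i\le\min(m,n)$ survive in $G$, and the convention $i\le j$ places them in $E^{+}$. So $E(G)=E^{+}\dot\cup E^{-}$.

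In $E^{-}$ we have $N^{-}(u_i)=\{v_j:j<i\}$, since no removed edge has $i>j$. These sets increase with $i$ and hence form a chain.

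In $E^{+}$ we have $N^{+}(u_i)=\{v_j:j>i\}$ when $i\le t$, and $N^{+}(u_i)=\{v_j:j\ge i\}$ when $i>t$. For $i<i'$ this gives
\[
N^{+}(u_{i'})\subseteq\{v_j:j\ge i'\}\subseteq\{v_j:j>i\}\subseteq N^{+}(u_i),
\]
so these neighborhoods are also totally ordered by inclusion. Both parts are therefore Ferrers, and $\fp(G)\le 2$.

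The only delicate point is this bookkeeping for the diagonal edges outside $M$ and for unequal $m$ and $n$. One must check that placing the surviving diagonal edges in $E^{+}$ preserves the nesting, which is exactly the displayed chain of inclusions. Indices $i>n$ or $j>m$ cause no trouble, since every containment used is between index intervals.
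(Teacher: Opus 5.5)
Your proof is correct and follows essentially the same route as the paper. For the lower bound you exhibit the induced $2K_2$ $\{u_1v_2,u_2v_1\}$, where the paper notes that $N(u_1),N(u_2)$ are incomparable; the upper bound in both is a crown-style triangular split. The differences are bookkeeping only. The paper puts $\{u_iv_j : i<j\le t\}$ together with all edges from $\{u_1,\dots,u_t\}$ to $\{v_{t+1},\dots,v_n\}$ into $E^+$, puts everything else (including every edge at $u_{t+1},\dots,u_m$) into $E^-$, and checks $E^-$ through nested $V$-side neighborhoods. Your global split by $i\le j$ versus $i>j$ instead lets you verify both parts through $U$-side neighborhoods.
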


\begin{proof}
If $t=0$, then $G=K_{m,n}$ is Ferrers, so $\fp(G)=1$.
If $t=1$, say the missing edge is $u_1v_1$, then all neighborhoods on the $U$-side are comparable:
$N(u_1)=V\setminus\{v_1\}\subseteq V=N(u_i)$ for $i\ge 2$, hence $G$ is Ferrers and $\fp(G)=1$.

Assume now that $t\ge 2$.
After relabeling, we may assume the deleted matching is
$M=\{u_iv_i: i=1,\dots,t\}$ with $U=\{u_1,\dots,u_m\}$ and $V=\{v_1,\dots,v_n\}$.
Then $N(u_i)=V\setminus\{v_i\}$ for $i=1,\dots,t$, and these neighborhoods are pairwise incomparable for $t\ge 2$.
In particular, $G$ is not Ferrers, so $\fp(G)\ge 2$.

We show $\fp(G)\le 2$ by constructing a partition of $E(G)$ into two Ferrers subgraphs.
Let $V_0=\{v_{t+1},\dots,v_n\}$ and $U_0=\{u_{t+1},\dots,u_m\}$ denote the vertices not incident to the deleted matching.
Define
\[
E^+ := \{u_i v_j : 1\le i<j\le t\}\ \cup\ \{u_i v : 1\le i\le t,\ v\in V_0\},
\]
and let $E^-:=E(G)\setminus E^+$.
Clearly $E(G)=E^+\dot\cup E^-$.

In $G^+=(U,V,E^+)$, the $U$-side neighborhoods satisfy
\[
N_{G^+}(u_1)\supseteq N_{G^+}(u_2)\supseteq\cdots\supseteq N_{G^+}(u_t)\supseteq
N_{G^+}(u_{t+1})=\cdots=N_{G^+}(u_m)=\varnothing,
\]
so $G^+$ is Ferrers.

For $G^-=(U,V,E^-)$, consider neighborhoods on the $V$-side.
For $1\le j\le t$ we have
\[
N_{G^-}(v_j)=\{u_i: 1\le i\le t,\ i>j\}\ \cup\ U_0,
\]
while for $v\in V_0$ we have $N_{G^-}(v)=U_0$.
Hence
\[
N_{G^-}(v_1)\supseteq N_{G^-}(v_2)\supseteq\cdots\supseteq N_{G^-}(v_t)=U_0
= N_{G^-}(v)\ \ (v\in V_0),
\]
so $G^-$ is Ferrers.
Thus $\fp(G)\le 2$, and together with $\fp(G)\ge 2$ we obtain $\fp(G)=2$ for $t\ge 2$.
\end{proof}

\section{A host-induced $2K_2$ conflict graph lower bound}\label{sec:conflict}

The obstruction $2K_2$ can be viewed from two perspectives: it may be induced \emph{inside a part}, or it may be forced by missing cross-edges already in the host graph.
The latter yields a simple and useful necessary condition.

\begin{definition}
Let $G=(U,V,E)$ be bipartite. The \emph{host-induced $2K_2$ conflict graph} $\mathcal C_{\mathrm{host}}(G)$ is the graph with vertex set $E$.
Two distinct edges $e=uv$ and $f=u'v'$ (with $u,u'\in U$ and $v,v'\in V$) are adjacent if
$u\neq u'$, $v\neq v'$, and the cross-edges are \emph{absent in the host}:
$uv'\notin E$ and $u'v\notin E$.
\end{definition}

\begin{proposition}\label{prop:conflict-lb}
For every bipartite graph $G$,
\[
\chi\big(\mathcal C_{\mathrm{host}}(G)\big)\le \fp(G).
\]
\end{proposition}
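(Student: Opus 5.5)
The plan is to show that any Ferrers edge partition is automatically a proper vertex coloring of $\mathcal C_{\mathrm{host}}(G)$. Fix an optimal partition $E=E_1\dot\cup\cdots\dot\cup E_k$ with $k=\fp(G)$ and each $(U,V,E_i)$ Ferrers. Define the coloring $c\colon E\to\{1,\dots,k\}$ by $c(e)=i$ iff $e\in E_i$; this is well defined because the $E_i$ are disjoint and cover $E$. It then suffices to check that $c$ is proper on $\mathcal C_{\mathrm{host}}(G)$, since that gives $\chi(\mathcal C_{\mathrm{host}}(G))\le k$.

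To check properness, take two edges $e=uv$ and $f=u'v'$ that are adjacent in $\mathcal C_{\mathrm{host}}(G)$, and suppose for contradiction that both lie in the same part $E_i$. Adjacency gives $u\ne u'$ and $v\ne v'$, so the four vertices $u,u',v,v'$ are distinct. Adjacency also gives $uv'\notin E$ and $u'v\notin E$, and therefore $uv',u'v\notin E_i$ because $E_i\subseteq E$. No other edges can occur among these four vertices: $G$ is bipartite, so there are no $U$--$U$ or $V$--$V$ edges. Consequently the subgraph of $(U,V,E_i)$ induced on $\{u,u',v,v'\}$ is exactly $\{e,f\}$, an induced $2K_2$. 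This contradicts the assumption that $E_i$ is Ferrers, so adjacent edges receive different colors.

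The argument mirrors the proof of Lemma~\ref{lem:lower}: an induced matching is a clique in $\mathcal C_{\mathrm{host}}(G)$, so the proposition strengthens that lemma. I expect no real obstacle. The only point needing care is the observation that non-edges of the host remain non-edges in every part. This monotonicity is what makes host-forced $2K_2$'s unavoidable. It also explains why the bound is only one-directional, since a part can additionally create induced $2K_2$'s whose cross-edges exist in $G$ but were assigned to other parts.
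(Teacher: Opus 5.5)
Your proposal is correct and follows essentially the same argument as the paper: host non-edges stay non-edges in every part, so two conflict-adjacent edges in a common part would induce a $2K_2$, and hence each part is an independent set of $\mathcal C_{\mathrm{host}}(G)$. Your side remark is also correct: induced matchings are cliques in $\mathcal C_{\mathrm{host}}(G)$, so this bound refines Lemma~\ref{lem:lower}.
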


\begin{proof}
Suppose $E=E_1\dot\cup\cdots\dot\cup E_k$ and each $G_i=(U,V,E_i)$ is Ferrers.
If two edges $e,f\in E$ are adjacent in $\mathcal C_{\mathrm{host}}(G)$, then the cross-edges $uv'$ and $u'v$ do not exist in $G$, hence they do not exist in any subgraph $G_i$.
Therefore $e$ and $f$ cannot lie in the same Ferrers part $E_i$, since they would form an induced $2K_2$ inside $G_i$.
Thus each $E_i$ is an independent set of $\mathcal C_{\mathrm{host}}(G)$, and so $k\ge \chi(\mathcal C_{\mathrm{host}}(G))$.
\end{proof}

\begin{remark}\label{rem:conflict-not-characterization}
In general, $\mathcal C_{\mathrm{host}}(G)$ only captures \emph{forced} $2K_2$ obstructions.
It does \emph{not} characterize feasibility of a Ferrers partition: for $G=K_{2,2}$, the set of two opposite edges is independent in $\mathcal C_{\mathrm{host}}(G)$,
yet it induces a $2K_2$ as a subgraph and hence is not Ferrers.
\end{remark}

\subsection{A worked example: the crown graph $H_4$}

Let $H_4$ be the crown graph on $U=\{u_1,u_2,u_3,u_4\}$ and $V=\{v_1,v_2,v_3,v_4\}$, i.e.\ $H_4=K_{4,4}\setminus\{u_iv_i:i\in[4]\}$.
Its bipartite adjacency matrix (rows indexed by $U$, columns by $V$) is $J-I$:
\[
A=
\begin{pmatrix}
0&1&1&1\\
1&0&1&1\\
1&1&0&1\\
1&1&1&0
\end{pmatrix}.
\]
Define $ E^+ := \{u_iv_j : 1\le i<j\le 4\}$ and
$E^- := \{u_iv_j : 1\le j<i\le 4\}$.
Then $E(H_4)=E^+\dot\cup E^-$, and both $G^+=(U,V,E^+)$ and $G^-=(U,V,E^-)$ are Ferrers:
in $G^+$ the $U$-side neighborhoods satisfy
$N(u_1)\supseteq N(u_2)\supseteq N(u_3)\supseteq N(u_4)$,
while in $G^-$ the $V$-side neighborhoods satisfy
$N(v_1)\supseteq N(v_2)\supseteq N(v_3)\supseteq N(v_4)$.
Hence $\fp(H_4)\le 2$, and since $H_4$ is not Ferrers we have $\fp(H_4)=2$.
Moreover, in this example the lower bound from Proposition~\ref{prop:conflict-lb} is tight: $\chi(\mathcal C_{\mathrm{host}}(H_4))=2$.

\section{Matrix Viewpoint}
\label{sec:matrix}

Let $A\in\{0,1\}^{m\times n}$ be the bipartite adjacency matrix of $G$.
A Ferrers graph corresponds to a matrix that can be permuted so that each row is a (possibly empty) prefix of ones.
This gives a compact matrix restatement of $\fp(G)$.

\begin{remark}
\label{rem:matrix}
$\fp(G)$ is the minimum $k$ such that the $1$-entries of $A$ can be partitioned into $k$ disjoint supports, each support forming a Ferrers $0$--$1$ matrix (allowing independent row/column permutations for each part).
\end{remark}

For the crown graph $H_n$, the matrix is $J-I$ (all ones except the diagonal).
The decomposition $E(H_n)=E^+\dot\cup E^-$ from Theorem~\ref{thm:crown} is simply the split of $J-I$ into its strictly upper- and strictly lower-triangular parts (after ordering rows/columns by index), and each part is Ferrers.
This example is a useful reminder that neighborhood-width can be a coarse proxy for $\fp$.

Theorem~\ref{thm:Kmn-minus-matching} can be phrased neatly in matrix terms.
If $A$ is the $m\times n$ all-ones matrix with zeros at the positions of a matching (i.e.\ the zero pattern is a partial permutation matrix), then the $1$-entries of $A$ can be partitioned into two Ferrers shapes unless there is at most one zero, in which case $A$ itself is Ferrers.

\section{Computational complexity}\label{sec:complexity}

\subsection{Decision problem.}

We formalize the natural decision version of $\fp$.

\begin{problem}[\textsc{Ferrers-$k$-Partition}]\label{prob:ferrers-k-partition}
Instance: A bipartite graph $G=(U,V,E)$ and an integer $k\ge 1$.
Question: Does there exist a partition $E=E_1\dot\cup\cdots\dot\cup E_k$ such that each
$G_i=(U,V,E_i)$ is Ferrers (equivalently, an induced-$2K_2$-free bipartite) graph?
\end{problem}

\begin{proposition}\label{prop:fp-in-NP}
\textsc{Ferrers-$k$-Partition} belongs to $\NP$.
\end{proposition}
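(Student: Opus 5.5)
The certificate is the partition itself. It is given as a labeling $c:E\to\{1,\dots,k\}$, with $E_i=c^{-1}(i)$. Before choosing this certificate, I first dispose of large $k$. If $k\ge |E|$, the answer is always yes, provided $E\neq\varnothing$ or empty parts are permitted: put each edge in its own part, since a single edge is trivially Ferrers. Likewise, if $k\ge \min(|U|,|V|)$, then Theorem~\ref{thm:upper} already gives a yes answer, because the width of $\mathcal P_U$ is at most $|U|$. So the verifier may answer yes directly when $k\ge|E|$. Otherwise $k<|E|$, and the certificate $c$ has length $O(|E|\log|E|)$, which is polynomial in the input size. This covers $k$ given in binary as well.

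Given such a certificate, the verifier checks each part $G_i=(U,V,E_i)$ for being Ferrers. The most direct way is to test all pairs of edges $uv,u'v'\in E_i$ with $u\ne u'$ and $v\ne v'$, and to confirm that $uv'\in E_i$ or $u'v\in E_i$. This is exactly the absence of an induced $2K_2$ in $G_i$. The pair test costs $O(|E_i|^2)$ membership queries per part, hence $O(|E|^2)$ in total. Alternatively, one can compute the neighborhoods $N_{G_i}(u)$, sort them by size, and check that consecutive ones are nested. By the definition of a Ferrers graph, this is equivalent to the pair test.

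Correctness is immediate. A yes-instance has a valid partition, which serves as the certificate. Conversely, an accepted certificate is a partition into $k$ Ferrers parts. Parts may be empty, matching the convention in the definition that isolated vertices are allowed.

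The only mildly delicate point is the encoding size when $k$ is large. The shortcut above handles it, so nothing here is a real obstacle.
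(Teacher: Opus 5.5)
Your proof is correct and is essentially the paper's argument: the certificate is the labeling $c:E\to[k]$, and each part $G_i$ is checked for the Ferrers property in polynomial time. Your explicit $2K_2$ pair test and your nested-neighborhood check both work; the paper instead cites linear-time certifying recognition of chain graphs. The large-$k$ preprocessing is harmless but unnecessary, because each label needs only $O(\log k)$ bits, which is already bounded by the input length when $k$ is written in binary. Its side condition is also slightly off: if empty parts were forbidden, $k>|E|$ would give a no-instance, though this does not matter under the empty-parts convention you adopt.
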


\begin{proof}
A certificate is a map $c:E\to[k]$ defining $E_i=\{e\in E: c(e)=i\}$.
Given $E_i$, we can test in polynomial time whether $G_i$ is a chain graph/Ferrers graph,
for instance by attempting to produce a nested-neighborhood ordering on one side and rejecting otherwise.
Moreover, chain graphs admit linear-time \emph{certifying} recognition algorithms \cite{HeggernesKratsch2007}.
\end{proof}

Problem~\ref{prob:ferrers-k-partition} with $k=1$ is exactly recognition of chain graphs (Ferrers graphs), which is solvable in linear time with certificates \cite{HeggernesKratsch2007}.
\\

\subsection{Relation to the template-partition framework.}
According to Gy\H{o}rffy et al. for bipartite host graphs $G$ and $H=2K_2$, the $H$-free bipartite graphs are precisely Ferrers (chain) graphs,
and therefore
\[
\fp(G)=\chi'_{2K_2}(G).
\]
Their standing conventions exclude empty template graphs and isolated vertices inside templates;
this does not affect the optimum value, since empty parts can be dropped and isolated vertices removed without changing the edge partition. \cite{GyorffyLondonNagyPluhar2025}

\subsection{Hardness evidence from neighboring problems.}
We do not attempt a full complexity classification for $\fp$ here; instead we record a few related NP-hardness
results indicating that chain-structure constraints can be algorithmically rigid.
(These do \emph{not} by themselves imply NP-hardness of computing $\fp(G)$.)

\begin{itemize}[leftmargin=2em]
\item \textbf{Chain-structure completion is hard.}
The \emph{chain graph sandwich problem} (deciding whether one can choose a graph between mandatory/optional edges
that is a chain graph) is NP-complete. \cite{DantasEtAl2011}

\item \textbf{The cover analogue becomes hard for $k\ge 3$.}
In the \emph{cover} version (edges may be covered more than once),
the $k$-\emph{chain subgraph cover} problem is NP-complete for $k\ge 3$ and polynomial-time solvable for $k\le 2$;
see, e.g., Takaoka for a clear statement and references. \cite{Takaoka2016}

\item \textbf{Induced matchings are hard already on bipartite graphs.}
Computing a maximum induced matching is NP-complete for bipartite graphs. \cite{Cameron1989}
Since induced matchings provide universal lower bounds on $\fp$, this suggests that obtaining tight lower-bound
certificates may be difficult in general instances.
\end{itemize}

\section{Open Problems}

The bounds $\nu_{\ind}(G)\le \fp(G)\le \min\{\width(\mathcal P_U),\width(\mathcal P_V)\}$ are simple, but the examples already show there is room between them.
Paths satisfy $\fp=\nu_{\ind}$, even cycles can have a gap of $1$, and by disjoint unions of cycles the gap $\fp-\nu_{\ind}$ can grow arbitrarily.

\begin{problem}
Compute $\fp(G)$ for further natural bipartite families (grids, convex bipartite graphs, for instance), and identify graph-theoretic conditions forcing $\fp(G)=\nu_{\ind}(G)$.
\end{problem}

\begin{problem}
How large can $\fp(G)$ be as a function of $\nu_{\ind}(G)$ under constraints such as bounded maximum degree or given edge density?
\end{problem}

\begin{problem}
Is it true that if $\fp(G)=\nu_{\ind}(G)$, then $G$ admits an optimal Ferrers partition in which every part is a forest of diameter at most $3$ (equivalently, a bistar or $P_4$)?
\end{problem}

\begin{problem}
From algorithmic perspective, a concrete next step is to determine the complexity of $\fp(G)\le 2$ for bipartite $G$. More broadly, it would be interesting to study approximation and parameterized aspects of $\fp(G)$, and to identify additional graph classes where $\fp(G)$ is computable in polynomial time.
\end{problem}

\end{document}